\newtheorem{thm}{Theorem}%
\newtheorem{lem}[thm]{Lemma}
\newtheorem{cor}[thm]{Corollary}
\theoremstyle{definition}
\newtheorem{de}[thm]{Definition}
\numberwithin{equation}{section}
\newcommand{\N}{\mathbb{N}}
\newcommand{\Z}{\mathbb{Z}}
\newcommand{\C}{\mathcal{C}}
\newcommand{\Clo}{\operatorname{Clo}}
\newcommand{\Con}{\operatorname{Con}}
\newcommand{\Pol}{\operatorname{Pol}}
\newcommand{\Comp}{\operatorname{Comp}}
\newcommand{\ab}[1]{{\mathbf{#1}}} 
\newcommand{\ob}[1]{{\mathbb{#1}}} 
\newcommand{\vb}[1]{\bm{#1}} 
\newcommand{\algop}[2]{( {#1}, {#2} )}
\newcommand{\VecTwo}[2]{
   \big(
   \begin{smallmatrix}
      #1 \\ #2
   \end{smallmatrix}
   \big)
   }
\newcommand{\id}{\mathrm{id}}
\newcommand{\join}{\vee}
\newcommand{\meet}{\wedge}
\newcommand{\Int}[2]{{\ob{I}\, [{#1},{#2}]}}
\date{\today}
\title{Congruence preserving expansions of nilpotent algebras}
\author{Erhard Aichinger}
\address{
Institut für Algebra,
Johannes Kepler Universit\"at Linz, Altenberger Strasse 69, 4040 Linz,
Austria}
\email{erhard@algebra.uni-linz.ac.at}
\author{Gábor Horváth}
\address{Institute of Mathematics,
University of Debrecen,
Pf.~400, Debrecen, 4002,
Hungary}
\email{ghorvath@science.unideb.hu}
\subjclass[2010]{}
\keywords{}
\thanks{
  The first listed author was supported by the Austrian Science Fund (FWF):~P24077 and~P29931.
 The second listed author was supported by the J\'{a}nos Bolyai Research Scholarship of the Hungarian Academy of Sciences,
by the European Union's Seventh Framework Programme (FP7/2007-2013) under grant agreement no.~318202, by the Hungarian Scientific Research Fund (OTKA) grant no.~K109185, and by the National Research, Development and Innovation Fund of Hungary, financed under the FK~124814 funding scheme. 
}
\begin{document}

\begin{abstract}
  We characterize those nilpotent algebras of prime power order
  and finite type
  in congruence modular varieties
  that have infinitely many polynomially inequivalent
  congruence preserving expansions. 
\end{abstract}

\maketitle

\section{The result}

Associated with every algebraic structure $\ab{A}$, there
are two clones which we will study in the present note:
the clone of polynomial functions $\Pol (\ab{A})$, and the
clone of congruence preserving functions $\Comp (\ab{A})$.
We say that an algebra $\ab{A'}$, defined on the same universe
as $\ab{A}$, is a \emph{congruence preserving expansion} of
$\ab{A}$ if $\Pol (\ab{A}) \subseteq \Pol(\ab{A'}) \subseteq \Comp (\ab{A})$.
For expanded groups, such expansions with \emph{unary} operations
have been studied in \cite{Pe:CEON}.
Considering algebras with the same clone of polynomial functions
as equivalent, we say that \emph{$\ab{A}$ has finitely many
  polynomially inequivalent
  congruence preserving expansions} if the set
$\{ C \mid C \text{ is a clone with } \Pol(\ab{A}) \subseteq
C \subseteq \Comp (\ab{A})\}$ is finite.
One extreme case
is $\Pol(\ab{A}) = \Comp (\ab{A})$: then $\ab{A}$ is called
\emph{affine complete} \cite{KP:PCIA}, and clearly $\ab{A}$ then has
only one congruence preserving expansion.
On the other side, if $\ab{A}$ has only finitely many fundamental operations
(i.~e., it is of \emph{finite type})  and $\Comp (\ab{A})$
is not finitely generated, then $\ab{A}$ has infinitely many inequivalent
congruence preserving expansions.
For finite $p$-groups $\ab{G}$, \cite{ALM:FGOC} provides a  complete
characterization 
when $\Comp (\ab{G})$ is finitely generated. However, there are
algebras for which $\Comp (\ab{A})$ is finitely generated, but $\ab{A}$
still has infinitely many inequivalent
congruence preserving expansions: the cyclic
group with $4$ elements \cite{Bu:PCCT} and the quaternion group with $8$ elements are
examples of such a behaviour. Our characterization
uses a condition on the congruence lattice that has been used in
\cite{AM:SOCO}: We say that a bounded lattice $\ob{L}$ \emph{splits strongly}
if it is the union of two proper subintervals $\ob{I}[0, \delta] \cup
 \ob{I}[\varepsilon, 1]$ with nonempty
intersection, which can be expressed by
\begin{equation} \label{eq:splitsstrongly}
\ob{L} \models \exists \, \delta, \varepsilon \in \ob{L}  \colon 
\big(
0 < \varepsilon \le \delta < 1 \text{ and }
(\forall \alpha \in \ob{L}  \colon  \alpha \le \delta \text{ or }
\alpha \ge \varepsilon) \big).
\end{equation}
Note that it is claimed that $\varepsilon \neq 0$ and $\delta \neq 1$. 
We say that a finite algebra $\ab{A}$ has \emph{few subpowers} if
there is a polynomial $p \in \mathbb{R}[x]$ such that for each $n \in \N$,
the algebra $\ab{A}^n$ has at most $2^{p(n)}$ subalgebras.
In \cite{BI:VWFS}, it is proved that such algebras
are characterized by having an \emph{edge term} and that
they generate congruence modular varieties.

The following theorem is the main result of the present note.
\begin{thm} \label{thm:1}
  Let $\ab{A}$ be a finite algebra of finite type with few subpowers.
  Then the following are equivalent:
  \begin{enumerate}
    \item \label{it:t0}  $\ab{A}$ has infinitely many polynomially inequivalent congruence preserving expansions.
    \item \label{it:t1} The interval $\mathcal{C} := \{ C  \mid C \text{ is a clone with } \Pol(\ab{A}) \subseteq
      C \subseteq \Comp (\ab{A}) \}$ in the lattice of all clones on $A$
      is infinite.
    \item \label{it:t2} There exists a clone $C$ with $\Pol (\ab{A}) \subseteq C \subseteq
      \Comp (\ab{A})$ that is not finitely generated.
  \end{enumerate} 
  If $\ab{A}$ is furthermore isomorphic to a direct product
  $\ab{A}_1 \times \cdots \times \ab{A}_n$ of nilpotent algebras of prime power order,
  and if for all $i,j \in \{1, \ldots, n \}$ with $i \neq j$, we have
  $\gcd (|\ab{A}_i|, |\ab{A}_j|) = 1$, 
  then the three conditions \eqref{it:t0}--\,\eqref{it:t2} are equivalent to  
  \begin{enumerate}
    \setcounter{enumi}{3}
  \item \label{it:t3} There is $i \in \{1, \ldots, n \}$ such that the congruence lattice of $\ab{A}_i$ splits strongly.
  \item \label{it:t4} The congruence lattice of $\ab{A}$ splits strongly.  
  \end{enumerate}
\end{thm}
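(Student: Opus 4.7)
The equivalence (1) $\Leftrightarrow$ (2) is immediate from the definition of polynomial equivalence, since two congruence preserving expansions of $\ab{A}$ are polynomially equivalent exactly when they induce the same clone of polynomial operations, so $\mathcal{C}$ is in bijection with the polynomial equivalence classes of congruence preserving expansions. For (3) $\Rightarrow$ (2), given a non-finitely-generated $C \in \mathcal{C}$, I would enumerate a generating set $f_1, f_2, \dots$ of $C$ over $\Pol(\ab{A})$ and set $C_k := \langle \Pol(\ab{A}) \cup \{f_1, \dots, f_k\}\rangle$; the chain $(C_k)$ lies in $\mathcal{C}$ and cannot stabilise, since otherwise $C$ would be finitely generated, and so it produces infinitely many distinct members of $\mathcal{C}$.

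The main obstacle of the first block is (2) $\Rightarrow$ (3), which I would prove by contrapositive: if every clone in $\mathcal{C}$ is finitely generated, then the union of any ascending chain in $\mathcal{C}$ is again finitely generated and equals some term of the chain, so $\mathcal{C}$ satisfies ACC. The few subpowers hypothesis then enters through the characterisation of finite generation of $\Comp$ from \cite{ALM:FGOC} together with the edge-term machinery of \cite{BI:VWFS}, which should be combined to promote this ACC into outright finiteness of $\mathcal{C}$. Without few subpowers the argument fails, since in general an infinite ACC poset of finitely generated clones is not excluded.

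For (4) $\Leftrightarrow$ (5), coprimality of the orders together with congruence modularity gives $\Con(\ab{A}) \cong \prod_i \Con(\ab{A}_i)$. Given a candidate witness $\delta = (\delta_i)$, $\varepsilon = (\varepsilon_i)$ for splits-strongly in the product, testing \eqref{eq:splitsstrongly} on congruences $\alpha$ with a single coordinate set to $1$ and the rest to $0$ (and vice versa) forces $\delta_j = 1$ and $\varepsilon_j = 0$ for all but one common coordinate $i$, whence $(\delta_i, \varepsilon_i)$ witnesses splits-strongly in $\Con(\ab{A}_i)$. The converse direction pads a single-factor witness with $0$s and $1$s to a product witness.

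Finally, to bridge the two blocks, coprimality also yields $\Pol(\ab{A}) = \prod_i \Pol(\ab{A}_i)$ and $\Comp(\ab{A}) = \prod_i \Comp(\ab{A}_i)$, so $\mathcal{C}$ decomposes as the product of the analogous intervals for the factors and is infinite iff one factor's interval is. This reduces the remaining equivalences to the single-factor case of a finite nilpotent algebra of prime power order. For ``splits strongly $\Rightarrow \mathcal{C}$ infinite'' I would use the overlapping subintervals $\Int{0}{\delta}$ and $\Int{\varepsilon}{1}$ together with the commutator theory of nilpotent congruence modular algebras and the edge term to construct an infinite family of congruence preserving operations of strictly increasing effective arity that are pairwise polynomially independent. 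For the converse I would invoke \cite{AM:SOCO}: the absence of a splits-strongly decomposition confines each congruence of $\ab{A}_i$ to a prescribed principal upper or lower subinterval, bounding the arity needed to detect non-polynomiality and so leaving only finitely many clones in $\mathcal{C}$. I expect this infinite family construction to be the main technical burden, as it must marry the commutator information specific to nilpotent prime-power-order algebras with the edge-term structure, and it is precisely at this step that the prime-power hypothesis is essential.
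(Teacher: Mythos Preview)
Your argument for $(2)\Rightarrow(3)$ has a genuine gap. Deriving ACC on $\mathcal{C}$ from the hypothesis that every clone in $\mathcal{C}$ is finitely generated is correct, but ACC alone does not make a lattice finite, and the references you invoke do not supply the missing step: \cite{ALM:FGOC} characterises when $\Comp(\ab{G})$ is finitely generated for a finite $p$-group $\ab{G}$, and \cite{BI:VWFS} only yields the edge term. The paper's route is different and uses the \emph{descending} chain condition: by \cite[Theorem~5.3]{Ai:CMCO} the interval $\mathcal{C}$ satisfies DCC (this is where few subpowers is actually used in the first block), so there is a $C\in\mathcal{C}$ minimal with $\Int{\Pol(\ab{A})}{C}$ infinite. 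If such a $C$ were finitely generated it would, by \cite[4.1.3]{PK:FUR}, have only finitely many subcovers $D_j$, every proper subclone of $C$ would lie below some $D_j$, and $\Int{\Pol(\ab{A})}{C}=\{C\}\cup\bigcup_j\Int{\Pol(\ab{A})}{D_j}$ would be a finite union of intervals that are finite by minimality of $C$, a contradiction. You mention neither DCC nor \cite{Ai:CMCO}, and your ACC route does not close as written.

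In the nilpotent block your outlines of the two hard implications omit the steps that carry the weight. For $(4)\Rightarrow(3)$ the paper defines explicit functions $f_n$ on $\ab{B}=\ab{A}_i$ (value $b$ on $(B\setminus(a/\delta))^n$ and $a$ elsewhere, with $(a,b)\in\varepsilon\setminus 0_B$) and then, crucially, proves that the expansion $\ab{B}'=(\ab{B},(f_n)_n)$ is still \emph{nilpotent} by showing that $\varepsilon$ remains central via preservation of a specific $4$-ary relation built from the Mal'cev term; only with nilpotency of the expansion in hand can one invoke Kearnes' bound \cite{Ke:CMVW} on ranks of commutator terms to conclude that $\Pol(\ab{B}')$ is not finitely generated. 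Your ``strictly increasing effective arity'' does not address why the expansion stays nilpotent, and without that Kearnes' theorem is unavailable. For $\neg(5)\Rightarrow\neg(3)$ the paper does not ``bound the arity needed to detect non-polynomiality''; it applies \cite[Corollary~3.4]{AM:SOCO} to write any congruence preserving expansion $\ab{A}'$ as a skew-free product of an algebra whose congruence lattice does not even split and finitely many simple factors, and then uses \cite[Propositions~3.7 and~3.8]{ALM:FGOC} on the first factor (and elementary arguments on the simple ones) to show that $\Pol(\ab{A}')$ is finitely generated.
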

The condition on the congruence lattice in items~\eqref{it:t3} and~\eqref{it:t4} has also appeared
in Theorem~1.1 of  \cite{AM:SOCO}, which states that a finite modular lattice
that splits strongly allows infinitely many different sequences satisfying
the properties of higher commutator operations.
Theorem~\ref{thm:1}  provides a description of those nilpotent groups that
have infinitely many polynomially inequivalent expansions.
\begin{cor} \label{cor:pgroup}
  Let $G$ be a finite nilpotent group.
  Then $G$ has infinitely many polynomially inequivalent
  congruence preserving expansions if and only if the lattice
  of normal subgroups of $G$ splits strongly, i.e., the normal
  subgroup lattice of $G$
  is the union of two proper subintervals that have
  at least one normal subgroup in common.
\end{cor}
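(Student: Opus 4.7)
The plan is to obtain Corollary~\ref{cor:pgroup} as a direct application of Theorem~\ref{thm:1}. The first step is to verify that a finite nilpotent group $G$ satisfies all the hypotheses of the enriched part of the theorem. By the structure theorem for finite nilpotent groups, $G$ is the internal direct product $P_1 \times \cdots \times P_k$ of its (normal) Sylow $p_i$-subgroups for the distinct prime divisors $p_1, \ldots, p_k$ of $|G|$. Each $P_i$ has prime power order; the cardinalities $|P_1|, \ldots, |P_k|$ are pairwise coprime by construction; and each $P_i$ is nilpotent as a group, hence also nilpotent as an algebra in the universal-algebraic sense (for groups the Freese--McKenzie commutator coincides with the classical one, so a $p$-group is automatically nilpotent in both senses). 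Moreover $G$ is of finite type and has a Mal'cev term $m(x,y,z) = xy^{-1}z$, so it generates a congruence modular variety and has few subpowers via the characterization cited in~\cite{BI:VWFS}.

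The second step is the routine identification of $\Con(G)$ with the lattice of normal subgroups of $G$ via $\theta \mapsto [1]_\theta$. This is an order isomorphism of bounded lattices, so the condition ``$\Con(G)$ splits strongly'' translates verbatim into the condition stated in the corollary, namely that the normal subgroup lattice of $G$ is the union of two proper subintervals whose intersection contains at least one normal subgroup.

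With these two preliminaries in place, Corollary~\ref{cor:pgroup} follows immediately from the equivalence \eqref{it:t0} $\iff$ \eqref{it:t4} in Theorem~\ref{thm:1}. There is no serious obstacle; the only slightly non-elementary ingredient is the few-subpowers property of groups, which is a well-known consequence of having a Mal'cev term (alternatively, one can observe directly that every subgroup of $G^n$ is generated by at most $n \log_2 |G|$ elements, giving a bound of $2^{O(n^2)}$ on the number of subalgebras of $G^n$).
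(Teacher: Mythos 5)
Your proof is correct and follows exactly the paper's argument: decompose $G$ into its Sylow subgroups, note these are nilpotent of pairwise coprime prime power orders, identify $\Con(G)$ with the normal subgroup lattice, and invoke the equivalence \eqref{it:t0}$\Leftrightarrow$\eqref{it:t4} of Theorem~\ref{thm:1}. The paper states this more tersely, but your verification of the hypotheses (few subpowers via the Mal'cev term, finite type, coincidence of the group-theoretic and commutator-theoretic notions of nilpotence) is just making explicit what the paper leaves implicit.
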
  

One can view the finiteness of the interval $\Int{\Pol (\ab{A})}{\Comp (\ab{A})}$
as a \emph{polynomial completeness property} \cite{KP:PCIA}. For finite abelian $p$-groups,
this property is described in the following corollary.
\begin{cor} \label{cor:abgroup}
  Let $p$ be a prime, let  $r \in \N$ and let $m_1, \ldots, m_r \in \N$ with
  $m_1 \ge \cdots \ge m_r$. Then the abelian group
  $G := \prod_{i=1}^r \Z_{p^{m_i}}$ has finitely many polynomially inequivalent
  congruence preserving expansions if and only if $(r \ge 2$ and $m_1 = m_2)$ or $(r=1$ and $m_1 = 1)$.
\end{cor}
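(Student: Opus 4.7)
The plan is to invoke Theorem~\ref{thm:1} to reduce the corollary to a lattice-theoretic question about the subgroup lattice of $G := \prod_{i=1}^{r}\Z_{p^{m_i}}$ (which coincides with the congruence lattice, as $G$ is abelian). The group $G$ is nilpotent of prime power order, of finite type, and possesses a Maltsev term, hence an edge term, hence few subpowers; so Theorem~\ref{thm:1} applies (take the trivial factorisation with single factor $G$) and yields: $G$ has finitely many polynomially inequivalent congruence preserving expansions if and only if $\operatorname{Sub}(G)$ does \emph{not} split strongly. It therefore suffices to characterise exactly when $\operatorname{Sub}(G)$ splits strongly, in four subcases.

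The two ``splits strongly'' subcases are handled by explicit constructions. If $r = 1$ and $m_1 \ge 2$, $\operatorname{Sub}(G)$ is a chain of length $\ge 2$, and any intermediate subgroup serves as $\varepsilon = \delta$. If $r \ge 2$ and $m_1 > m_2$, write $G = \Z_{p^{m_1}} \times H$ with $H := \prod_{i \ge 2}\Z_{p^{m_i}}$; because $\exp H \le p^{m_2} \le p^{m_1-1}$, I set $\delta := p\Z_{p^{m_1}} \times H$ and $\varepsilon := p^{m_1-1}\Z_{p^{m_1}} \times \{0\}$. Then $0 < \varepsilon \le \delta < G$, and for any subgroup $\alpha \not\le \delta$ one picks $(x,h) \in \alpha \setminus \delta$; here $x$ is a unit in $\Z_{p^{m_1}}$, so $(x,h)$ has order $p^{m_1}$ and its unique minimal subgroup $\langle p^{m_1-1}(x,h) \rangle$ equals $\varepsilon$, whence $\varepsilon \subseteq \alpha$.

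The subcase $r = 1$, $m_1 = 1$ is trivial. In the remaining ``does not split strongly'' subcase $r \ge 2$, $m_1 = m_2$, assume toward a contradiction that $\varepsilon \le \delta$ splits $\operatorname{Sub}(G)$ strongly. Any atom $A \not\le \delta$ satisfies $\varepsilon \le A$ with $\varepsilon > 0$, forcing $A = \varepsilon$; so $\delta$ contains all atoms of $G$ except at most one. Since $r \ge 2$, the atoms of $G$ (the $1$-dimensional subspaces of $G[p] \cong \mathbb{F}_p^r$) still span $G[p]$ after any single removal, whence $\delta \supseteq G[p]$. The main obstacle is now to bootstrap from $\delta \supseteq G[p]$ to $\delta \supseteq G$, contradicting $\delta < G$.

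For the bootstrap, let $a_1, \dots, a_r$ generate the cyclic factors, and set $K := \langle a_1, a_2\rangle \cong \Z_{p^{m_1}}^{2}$. The $p+1$ cyclic subgroups $\langle a_2 \rangle$ and $\langle a_1 + c a_2 \rangle$ ($c = 0, \dots, p-1$) have order $p^{m_1}$ and realise, as their unique minimal subgroups, all $p+1$ atoms of $K[p]$; at most one equals $\varepsilon$, so at least $p$ lie in $\delta$, and any two of them generate $K$ (since $c_1 - c_2$ is a unit in $\Z_{p^{m_1}}$ whenever $0 \le c_1 < c_2 \le p-1$). Thus $K \subseteq \delta$. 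For each $j \ge 3$: if $m_j = m_1$, rerun the argument with the pair $(a_1, a_j)$ to force $a_j \in \delta$; if $m_j < m_1$, the cyclic subgroups $\langle a_1 + a_j \rangle$ and $\langle a_2 + a_j \rangle$ are of order $p^{m_1}$ with minimal subgroups $\langle p^{m_1-1}a_1\rangle$ and $\langle p^{m_1-1}a_2\rangle$ (because $p^{m_1-1}a_j = 0$), at least one of which differs from $\varepsilon$, forcing $a_j \in \delta$. Hence $\delta \supseteq \langle a_1, \dots, a_r \rangle = G$, the desired contradiction. The delicate point is $p = 2$, where $K[p]$ has only $3$ atoms, but even there at least two of the three cyclic subgroups lie in $\delta$ and any two generate $K$, so the argument still goes through.
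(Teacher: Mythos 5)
Your overall reduction to Theorem~\ref{thm:1} matches the paper's, but your handling of the lattice-theoretic core is genuinely different. The paper short-circuits the question by citing a theorem of Breaz and C\u{a}lug\u{a}reanu (the subgroup lattice of $\prod_i \Z_{p^{m_i}}$ \emph{splits} iff $r=1$ or $m_1 > m_2$) together with a structural lemma from \cite{AM:SOCO} about lattices that split but not strongly, whereas you give a direct, self-contained characterisation of when $\operatorname{Sub}(G)$ splits strongly by explicit constructions and an explicit impossibility argument. Your route is longer but more elementary and does not need either external citation; the paper's is shorter but opaque unless the reader tracks down both references. Both are valid.

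One step in your impossibility argument deserves tightening. You observe that every atom $A\not\le\delta$ must equal $\varepsilon$, and then assert ``the $p+1$ cyclic subgroups \ldots realise \ldots all $p+1$ atoms of $K[p]$; at most one equals $\varepsilon$, so at least $p$ lie in $\delta$.'' As written this conflates the atoms with the cyclic subgroups: from ``the minimal subgroup of $C$ differs from $\varepsilon$'' it does not immediately follow that $C\le\delta$, since $\varepsilon$ need not be an atom, and $\varepsilon\le C$ only gives $\varepsilon\supseteq C[p]$, not $\varepsilon = C[p]$. The gap is easily closed in either of two standard ways: (i) reduce without loss of generality to the case where $\varepsilon$ is an atom (shrinking $\varepsilon$ preserves the splitting property), after which your reasoning is airtight; or (ii) note that any two distinct cyclic subgroups from your list intersect trivially, so at most one can contain $\varepsilon$, hence at most one can fail to be $\le\delta$. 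You already verify the trivial-intersection fact implicitly when you check that any two of them generate $K$, so option (ii) costs nothing extra. The same remark applies to the $j\ge 3$ step. With that repair, your proof is correct.
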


We compare this to known completeness properties: $G = \prod_{i=1}^{r} \Z_{p^{m_i}}$
with $r \in \N$ and  $m_1 \ge \cdots \ge m_r \ge 1$ is \emph{affine complete}
if and only if  $r \ge 2$ and $m_1 = m_2$ \cite{No:UDAV}. By Corollary~\ref{cor:abgroup}, $G$ has
only \emph{finitely many polynomially inequivalent congruence preserving expansions}
if and only if it is affine complete or simple. Finally by \cite[Theorem~1.2]{ALM:FGOC},
the \emph{clone of congruence preserving functions of $G$ is finitely generated}
if and only if $G$ is affine complete or cyclic.

The proofs of the results stated in this introduction will be given
in Section~\ref{sec:proofs}. Before, Section~\ref{sec:dp} provides
some auxiliary results about clones
and direct products. 

\section{Direct products} \label{sec:dp}
We need some information on clones acting on direct products. The results
contained in this section are for the most part known, or follow quite
immediately from existing theory. In the sequel,
a vector $(a_1,\ldots, a_n)$ will sometimes by written as $\vb{a}$. 
\begin{de}
  Let $A, B$ be sets, let $n \in \N$, and let
  $c  \colon  A^n \to A$ and $d \colon  B^n \to B$. Then we define
   the mapping $c \otimes d  \colon  (A \times B)^n \to A \times B$
   by $$c \otimes d \,\,\, \left( \VecTwo{\vb{a}}{\vb{b}} \right) := \VecTwo{c (\vb{a})}{d(\vb{b})}$$
   for $\vb{a} \in A^n$, $\vb{b} \in B^n$.
\end{de}
For a clone $C$ on the set $A$ , we let $C^{[n]}$ be its $n$-ary part
$C \cap A^{A^n}$.
\begin{de}
   Let $A, B$ be sets, let $C$ be a clone on $A$, and let $D$ be
  a clone $B$. We define the set $C \otimes D$, which consists of finitary functions on
  $A \times B$,  by
  \[
  C \otimes D := \{ c \otimes d \mid 
                     n \in \N,  c \in C^{[n]},
  d \in D^{[n]} \}.
  \]
\end{de}

\begin{lem} \label{lem:cod}
  Let $A, B$ be sets, let $C$ be a clone on $A$, and let $D$ be
  a clone $B$.Then the set $C \otimes D$ is a clone on $A \times B$.
\end{lem}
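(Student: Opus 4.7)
The plan is to verify the two defining properties of a clone: containment of all projections and closure under composition. Both will follow from the fact that the $\otimes$ operation is compatible, in the strongest possible sense, with coordinate-wise application.

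First I would observe that for each $n \in \N$ and $i \in \{1,\ldots,n\}$, the $i$-th $n$-ary projection $\pi_i^{A \times B}$ on $A \times B$ is exactly $\pi_i^A \otimes \pi_i^B$, where $\pi_i^A \in C^{[n]}$ and $\pi_i^B \in D^{[n]}$ are the respective projections in the clones $C$ and $D$. Hence $C \otimes D$ contains all projections on $A \times B$.

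The main step is the closure-under-composition calculation, which I would state as the identity
\[
  (c \otimes d)\bigl(c_1 \otimes d_1, \ldots, c_n \otimes d_n\bigr)
  \;=\;
  c(c_1, \ldots, c_n) \,\otimes\, d(d_1, \ldots, d_n),
\]
valid for $c \in C^{[n]}$, $d \in D^{[n]}$, and $c_i \in C^{[k]}$, $d_i \in D^{[k]}$. To verify it, I would evaluate both sides on an arbitrary input $\bigl(\VecTwo{\vb{a}_1}{\vb{b}_1}, \ldots, \VecTwo{\vb{a}_k}{\vb{b}_k}\bigr) \in (A \times B)^k$: unwinding the definition of $\otimes$ shows that each inner $c_j \otimes d_j$ produces $\VecTwo{c_j(\vb{a}_1,\ldots,\vb{a}_k)}{d_j(\vb{b}_1,\ldots,\vb{b}_k)}$, after which a second application of $\otimes$ for the outer pair $(c,d)$ gives exactly $\VecTwo{c(c_1(\vb{a}),\ldots,c_n(\vb{a}))}{d(d_1(\vb{b}),\ldots,d_n(\vb{b}))}$. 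Since $C$ is a clone, $c(c_1, \ldots, c_n) \in C^{[k]}$, and similarly $d(d_1,\ldots,d_n) \in D^{[k]}$, so the right-hand side is an element of $C \otimes D$.

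There is no genuine obstacle here; the only thing to be slightly careful about is matching the arities on the two factors so that the displayed identity makes sense (both $c$ and $d$ must be $n$-ary, and all of $c_1,\ldots,c_n,d_1,\ldots,d_n$ must share a common arity $k$), which is automatic from the definition of $C \otimes D$. Combined with the projection observation, this establishes that $C \otimes D$ is a clone on $A \times B$.
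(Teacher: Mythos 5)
Your proof is correct and follows the same approach the paper takes; the paper simply states that the calculations are ``straightforward,'' whereas you have written out the key composition identity $(c \otimes d)(c_1 \otimes d_1, \ldots, c_n \otimes d_n) = c(c_1,\ldots,c_n) \otimes d(d_1,\ldots,d_n)$ explicitly. No discrepancy to report.
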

\begin{proof}
$C \otimes D$ contains all projections. For
$f \in (C \otimes D)^{[n]}$ and $g_1, \ldots, g_n \in (C \otimes D)^{[m]}$,
straightforward calculations show  $f(g_1, \ldots, g_n) \in (C \otimes D)^{[m]}$.
\end{proof}

For a set $X$ of finitary functions on $A$, the \emph{clone generated by $X$}
is denoted by
$\Clo_A (X)$.
\begin{lem} \label{lem:X}
  Let $A, B$ be sets, let $C$ be a clone on $A$ that is generated
  by $X \subseteq C$, and let $D$ be a clone on $B$
  that is generated by $Y \subseteq D$.
  Let
  \begin{multline*}
  Z := \{ \VecTwo{\vb{a}}{\vb{b}} \mapsto \VecTwo{f (\vb{a})}{b_1} \mid
  f \in X \} \cup 
       \{ \VecTwo{\vb{a}}{\vb{b}} \mapsto \VecTwo{a_1}{g (\vb{b})} \mid
       g \in Y \} \,\, \cup \\
       \{ \left(\VecTwo{a_1}{b_1}, \VecTwo{a_2}{b_2}\right) \mapsto \VecTwo{a_1}{b_2} \}.
  \end{multline*}
  Then the clone on $A \times B$ that is generated by $Z$ is equal to
  $C \otimes D$.
\end{lem}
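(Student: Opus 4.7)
The plan is to prove the two inclusions separately. The inclusion $\Clo_{A \times B}(Z) \subseteq C \otimes D$ is the easy direction: each generator in $Z$ is already of the form $c \otimes d$ for suitable $c \in C$ and $d \in D$. Namely, for $f \in X$ of arity $n$, the corresponding element of $Z$ is $f \otimes p$, where $p$ is the first projection of arity $n$ on $B$; similarly for $g \in Y$; and the binary mixing map is $\pi_1 \otimes \pi_2$, where the $\pi_i$ are the two binary projections. Since $C$ and $D$ both contain all projections, these functions lie in $C \otimes D$, and by Lemma~\ref{lem:cod} the set $C \otimes D$ is already a clone, so it contains the clone generated by $Z$.

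The bulk of the work is the reverse inclusion. The key auxiliary claim I would establish first is: for every $n \in \N$ and every $c \in C^{[n]}$, the function $F_c \colon \binom{\vb a}{\vb b} \mapsto \binom{c(\vb a)}{b_1}$ lies in $\Clo_{A\times B}(Z)$. I would prove this by induction on the complexity of a term for $c$ over the generators $X$. For the base case $c = \pi_i^{(n)}$, I use the mixing function $m \in Z$ applied to the coordinate projections of $A \times B$: $m\bigl(\pi_i^{(n)}(\vb x), \pi_1^{(n)}(\vb x)\bigr) = m\bigl(\binom{a_i}{b_i}, \binom{a_1}{b_1}\bigr) = \binom{a_i}{b_1}$. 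For the induction step, suppose $c = f(c_1, \ldots, c_k)$ with $f \in X$ of arity $k$ and each $c_i$ an $n$-ary element of $C$ for which $F_{c_i}$ is already in $\Clo_{A\times B}(Z)$. Let $H_f \in Z$ be the element associated to $f$. Then
\[
H_f\bigl(F_{c_1}(\vb x), \ldots, F_{c_k}(\vb x)\bigr)
 = H_f\!\left( \tbinom{c_1(\vb a)}{b_1}, \ldots, \tbinom{c_k(\vb a)}{b_1} \right)
 = \tbinom{f(c_1(\vb a), \ldots, c_k(\vb a))}{b_1}
 = F_c(\vb x),
\]
because $H_f$ returns the \emph{first} of the second coordinates of its inputs, which is $b_1$ in every slot. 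This shows $F_c \in \Clo_{A\times B}(Z)$.

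By the completely symmetric argument applied to $Y$ and $D$, for every $d \in D^{[n]}$ the function $G_d \colon \binom{\vb a}{\vb b} \mapsto \binom{a_1}{d(\vb b)}$ lies in $\Clo_{A \times B}(Z)$. Finally, given an arbitrary element $c \otimes d$ of $C \otimes D$ with $c \in C^{[n]}$ and $d \in D^{[n]}$, I form $m(F_c, G_d)$, which evaluates to $m\bigl(\binom{c(\vb a)}{b_1}, \binom{a_1}{d(\vb b)}\bigr) = \binom{c(\vb a)}{d(\vb b)} = (c \otimes d)(\vb x)$. Hence $c \otimes d \in \Clo_{A \times B}(Z)$, completing the proof.

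There is no serious obstacle: the only point requiring care is the observation that the padding by $b_1$ on the second coordinate is stable under composition with the generators in $Z$, which is exactly the reason the generators for $f \in X$ were defined to output $b_1$ (and not some other projection) in the second slot. Once this is noted, the induction and the final assembly via the mixing map go through routinely.
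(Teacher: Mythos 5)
Your proof is correct, but it takes a more elementary route than the paper. The paper adopts Mal'cev's \emph{function algebra} (iterative algebra) viewpoint: it observes that the coordinate maps $\psi_C \colon c \otimes d \mapsto c$ and $\psi_D \colon c \otimes d \mapsto d$ are homomorphisms of the function algebras $\algop{C \otimes D}{\id, \zeta, \tau, \Delta, \nabla, \circ} \to \algop{C}{\ldots}$ and $\ \to \algop{D}{\ldots}$, and then invokes the general fact that a homomorphism carries the subalgebra generated by $Z$ onto the subalgebra generated by $\psi_C(Z)$. This yields, with essentially no computation, that for each $c \in C$ there is \emph{some} $d' \in D$ with $c \otimes d' \in \Clo_{A\times B}(Z)$ (and symmetrically for $D$), and then the mixing map $\pi_1 \otimes \pi_2$ finishes the job exactly as in your last step. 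You instead prove the stronger, more concrete claim that one may always take $d' = \pi_1^{(n)}$, by explicit induction on the term representing $c$ over $X$; the key observation that the second coordinate stays pinned at $b_1$ throughout is indeed the crux, and you verify both the base case (projections, via the mixing map) and the step correctly. Your approach avoids the function algebra formalism entirely and is self-contained, at the price of a slightly longer argument; the paper's approach is shorter but presupposes familiarity with viewing clones as universal algebras and with the image-of-generated-subalgebra lemma. Both are valid and end with the same assembly via $\pi_1 \otimes \pi_2$.
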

\begin{proof}
   We proceed as in the proof of Proposition~4.1 of \cite{ALM:FGOC}.
  We define  $\psi_C  \colon  C \otimes D \to C$ by $\psi_C (c \otimes d) = c$
  and
  $\psi_D  \colon   C \otimes D \to D$ by $\psi_D (c \otimes d) = d$.
  Adopting the viewpoint of \cite{Ma:IAAP},
  we consider clones as function algebras:
  the idea of this approach is that a clone on $A$ is a subalgebra of
  $\bigcup_{n \in \N} A^{A^n}$ equipped with the unary operations $\zeta$ (rotation of the arguments),
  $\tau$ (swapping the first two arguments), $\Delta$ (taking a minor), $\nabla$ (adding
  an inessential argument), and one binary operation $\circ$ that composes two functions in
  a certain way. A detailed account of this point of view is given in \cite[p.\ 38]{PK:FUR}.
  Using this approach, we observe that
  the mapping $\psi_C$ is an epimorphism
  from the algebra  $\algop{C \otimes D}{\id_{A \times B}, \zeta, \tau, \Delta, \nabla, \circ}$
  to the algebra  $\algop{C}{\id_{A}, \zeta, \tau, \Delta, \nabla, \circ}$.
  Since $X \subseteq \psi_C (Z)$,
  $\Clo_A (\psi_C (Z)) = C$.
   Now a basic property on the interaction of homomorphisms and subalgebra generation
  \cite[Theorem~II.6.6]{BS:ACIU} yields
   $\psi_C (\Clo_{A \times B} (Z)) = C$. Similarly, $\psi_D (\Clo_{A \times B} (Z)) = D$.
    We are now ready to show
    \begin{equation} \label{eq:clo}
      \Clo_{A \times B} (Z) = C \otimes D.
    \end{equation}
    The ``$\subseteq$''-inclusion follows from $Z \subseteq C \otimes D$.
    For ``$\supseteq$'', we choose $c \otimes d \in C \otimes D$.
    Since $\psi_C (\Clo_{A \times B} (Z)) = C$, we find $d' \in D$ such that
    $c \otimes d' \in \Clo_{A \times B} (Z)$. Similarly, we find $c' \in D$
    with $c' \otimes d \in \Clo_{A \times B} (Z)$. If we denote the binary projections
    by $\pi_1$ and $\pi_2$, we see that the last element listed in the definition
    of $Z$ is $\pi_1 \otimes \pi_2$. Hence the composition
    $\pi_1 \otimes \pi_2 \, (c \otimes d', c' \otimes d)$ lies in $\Clo_{A \times B} (Z)$,
    which implies $(c \otimes d) \in \Clo_{A \times B} (Z)$.
\end{proof}

\begin{cor} \label{cor:fg}
  Let $A, B$ be sets, let $C$ be a clone on $A$, and let $D$ be a clone
  on $B$. Then $C \otimes D$ is finitely generated if and only if both
  $C$ and $D$ are finitely generated.
\end{cor}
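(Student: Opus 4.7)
The plan is to deduce both directions directly from the machinery built in Lemma~\ref{lem:X}, so the proof should be short.

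For the ``if'' direction, suppose $C$ is generated by a finite set $X$ and $D$ by a finite set $Y$. Then the set $Z$ constructed in Lemma~\ref{lem:X} has cardinality at most $|X|+|Y|+1$ and therefore is finite. By Lemma~\ref{lem:X}, $Z$ generates $C \otimes D$ as a clone on $A \times B$, so $C \otimes D$ is finitely generated.

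For the ``only if'' direction, I would reuse the epimorphisms of function algebras $\psi_C \colon C \otimes D \to C$ and $\psi_D \colon C \otimes D \to D$ introduced in the proof of Lemma~\ref{lem:X} (sending $c \otimes d$ to $c$ and $d$ respectively). Suppose $C \otimes D = \Clo_{A \times B}(W)$ for some finite set $W$. Applying the standard fact that surjective homomorphisms preserve generation (\cite[Theorem~II.6.6]{BS:ACIU}, as already invoked inside the proof of Lemma~\ref{lem:X}), we get $C = \psi_C(\Clo_{A \times B}(W)) = \Clo_A(\psi_C(W))$, and $\psi_C(W)$ is finite. The analogous argument with $\psi_D$ shows that $D$ is finitely generated.

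There is essentially no obstacle here: once Lemma~\ref{lem:X} and the function-algebra epimorphisms $\psi_C, \psi_D$ are available, the corollary is a one-line consequence in each direction. The only care needed is to make explicit that the $Z$ supplied by Lemma~\ref{lem:X} is finite whenever $X$ and $Y$ are, and that the homomorphic-image argument applies in the category of function algebras in the sense of \cite{Ma:IAAP} rather than in the category of clones \emph{qua} clones.
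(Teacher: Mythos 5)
Your proposal is correct and follows essentially the same route as the paper: the ``if'' direction from the finiteness of $Z$ in Lemma~\ref{lem:X}, and the ``only if'' direction from the fact that $C$ and $D$ are homomorphic images of the function algebra $C \otimes D$ via $\psi_C$ and $\psi_D$. You merely make explicit (via \cite[Theorem~II.6.6]{BS:ACIU}) what the paper leaves implicit, namely that finite generation passes to homomorphic images.
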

\begin{proof} The ``if''-direction follows from Lemma~\ref{lem:X}.
  For the ``only if''-direction, we observe that
  both function algebras 
  $\algop{C}{\id_{A}, \zeta, \tau, \Delta, \nabla, \circ}$
  and
  $\algop{D}{\id_{B}, \zeta, \tau, \Delta, \nabla, \circ}$
  are homomorphic images of the algebra
  $\algop{C \otimes D}{\id_{A \times B}, \zeta, \tau, \Delta, \nabla, \circ}$.
\end{proof}  

The polynomial functions on the direct product of two algebras can in general
not be determined directly from the polynomial functions on the factors (for finite  groups, this phenomenon has been studied in \cite{Sc:TAOP}).
However,
under the additional assumption that the algebras lie in a congruence permutable
variety and that all congruences in the direct product are
\emph{product congruences}, a decomposition into the direct factors
is possible.
Let $\ab{A} := \ab{B} \times \ab{C}$. A congruence $\alpha$ of
$\ab{A}$ is a \emph{product congruence} if there exist
$\beta \in \Con (\ab{B})$
  and $\gamma \in \Con (\ab{C})$ such that
  \[
  \alpha = \{ \left(\VecTwo{b_1}{c_1}, \VecTwo{b_2}{c_2}\right) \mid (b_1, b_2) \in \beta \text{ and }
  (c_1, c_2) \in \gamma \}.
  \]
  A congruence of $\ab{A}$ that is not a product congruence is a \emph{skew congruence}.
  We say that $\ab{A} = \ab{B} \times \ab{C}$
  is a \emph{skew-free direct product of $\ab{B}$ and $\ab{C}$}
  if $\ab{A}$ has no skew congruences.

\begin{lem}[{\cite{KM:PFOS}}] \label{lem:abc}
  Let $\ab{A}$ be an algebra with a Mal'cev term. Suppose that
  $\ab{A} = \ab{B} \times \ab{C}$  is a skew-free direct product of $\ab{B}$ and
  $\ab{C}$.
  Then $\Pol (\ab{A}) = \Pol (\ab{B}) \otimes \Pol (\ab{C})$.
 \end{lem}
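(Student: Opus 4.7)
The inclusion $\Pol(\ab{A}) \subseteq \Pol(\ab{B}) \otimes \Pol(\ab{C})$ holds in full generality, requiring neither the Mal'cev term nor skew-freeness. Since the basic operations of $\ab{A} = \ab{B} \times \ab{C}$ act coordinatewise, we have $f^{\ab{A}} = f^{\ab{B}} \otimes f^{\ab{C}}$ for each basic operation $f$, and induction on term complexity gives
$t^{\ab{A}}(\vb{x}, \vb{e}) = t^{\ab{B}}(\vb{x}, \pi_1 \vb{e}) \otimes t^{\ab{C}}(\vb{x}, \pi_2 \vb{e})$
for every term $t$ and substituted constants $\vb{e} \in A^{k}$. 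Hence every polynomial of $\ab{A}$ decomposes canonically as $f = f_B \otimes f_C$ with $f_B \in \Pol(\ab{B})$ and $f_C \in \Pol(\ab{C})$, yielding an injective clone homomorphism
$\Phi \colon \Pol^{[n]}(\ab{A}) \to \Pol^{[n]}(\ab{B}) \times \Pol^{[n]}(\ab{C})$, $f \mapsto (f_B, f_C)$,
for every arity $n$; injectivity comes from $\eta_B \cap \eta_C = 0_A$, where $\eta_B = \ker\pi_1$ and $\eta_C = \ker\pi_2$ are the factor congruences of $\ab{A}$.

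For the reverse inclusion, I would first observe that the image of $\Phi$ is a subdirect product: given $p(\vb{x}) = t^{\ab{B}}(\vb{x}, \vb{b}) \in \Pol^{[n]}(\ab{B})$ and any $\vb{c}_0 \in C^{|\vb{b}|}$, the polynomial $t^{\ab{A}}(\vb{x}, (\vb{b}, \vb{c}_0))$ of $\ab{A}$ has $\ab{B}$-part $p$, and symmetrically for the second projection. Since $\Pol^{[n]}(\ab{B})$ and $\Pol^{[n]}(\ab{C})$ belong to the Mal'cev variety generated by $\ab{A}$ (via the Mal'cev term $m$ acting pointwise), Fleischer's classification of subdirect products in a Mal'cev variety applies: the image of $\Phi$ is controlled by a pair of congruences $\alpha \in \Con(\Pol^{[n]}(\ab{B}))$ and $\beta \in \Con(\Pol^{[n]}(\ab{C}))$ together with an isomorphism $\Pol^{[n]}(\ab{B})/\alpha \cong \Pol^{[n]}(\ab{C})/\beta$, and $\Phi$ is surjective exactly when $\alpha = 1$.

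The crucial remaining step is to extract $\alpha = 1$ from the skew-freeness of $\ab{A}$. The heuristic is that a nontrivial pair $(p_1, p_2) \in \alpha$, with a common witness $q \in \Pol^{[n]}(\ab{C})$ such that $p_1 \otimes q$ and $p_2 \otimes q$ both lie in $\Pol^{[n]}(\ab{A})$, encodes a dependency between $\ab{B}$-polynomials and $\ab{C}$-polynomials that the Mal'cev structure turns into a congruence on $\ab{A}$ that cuts across the decomposition $\ab{B} \times \ab{C}$; in the skew-free setting every congruence is a product, so no such cross-cutting congruence exists, and $\alpha$ collapses to $1$. I expect the main obstacle to be making this translation rigorous---concretely showing that a nontrivial Fleischer correspondence on the polynomial clones forces a skew congruence on $\ab{A}$ (and requiring the Mal'cev term to lift the polynomial-level identification to an element-level congruence); this is the technical content of the cited Kiss--McKenzie result and the place where both hypotheses (Mal'cev and skew-free) are indispensable.
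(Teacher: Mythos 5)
The paper's own ``proof'' of Lemma~\ref{lem:abc} is a pure citation: it points to Corollary~2 of \cite{KM:PFOS} and to Corollary~6.4 of \cite{AM:IOAW} without reproducing an argument. Against that backdrop, your proposal attempts an actual proof sketch, and the two easy pieces of it are right. The forward inclusion $\Pol(\ab{A}) \subseteq \Pol(\ab{B}) \otimes \Pol(\ab{C})$ by coordinatewise decomposition of terms-with-constants is correct and standard, and the reformulation of the reverse inclusion as surjectivity of $\Phi$, with the observation that the image is a subdirect product of $\Pol^{[n]}(\ab{B}) \times \Pol^{[n]}(\ab{C})$ (both viewed as subalgebras of the powers $\ab{B}^{B^n}$, $\ab{C}^{C^n}$, hence in the Mal'cev variety generated by $\ab{A}$) so that Fleischer's lemma applies, is a sound setup.

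But the step that actually carries the lemma --- showing that skew-freeness of $\ab{A}=\ab{B}\times\ab{C}$ forces the Fleischer congruence $\alpha$ on $\Pol^{[n]}(\ab{B})$ to collapse to $1$ --- is not proven; you explicitly label it a heuristic and identify it as the place where the real work lies. That is an honest assessment, and it is accurate: what you have written does not close the gap. Notice also that the tempting shortcut (show the switch operation $\big((b_1,c_1),(b_2,c_2)\big)\mapsto (b_1,c_2)$ is a polynomial of $\ab{A}$, then compose it with lifts of $p$ and $q$) runs into the same wall: skew-freeness immediately gives that the switch preserves all congruences of $\ab{A}$, so the switch lies in $\Comp(\ab{A})$, but getting it into $\Pol(\ab{A})$ is exactly as hard as the original problem --- indeed the whole paper is about the gap between $\Pol$ and $\Comp$. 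So the translation from ``nontrivial Fleischer correspondence on the clones $\Pol^{[n]}$'' to ``skew congruence on $\ab{A}$ itself'' is not a cosmetic technicality; it is the entire content of the cited Kaarli--Mayr/Aichinger--Mayr results and would need a genuine argument (pinning down how a proper common quotient of $\Pol^{[n]}(\ab{B})$ and $\Pol^{[n]}(\ab{C})$ produces, via evaluation and the Mal'cev term, a congruence on $\ab{A}$ that is not a product congruence). As submitted, the proposal is an informative outline with a correctly located but unfilled hole, not a proof.
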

\begin{proof}
  This is essentially Corollary~2 from \cite{KM:PFOS}; the claim
  can also be derived directly from Corollary~6.4 of \cite{AM:IOAW}.
\end{proof}  

\begin{cor}
    Let $\ab{A}$ be an algebra with a Mal'cev term. Suppose that
  $\ab{A} = \ab{B} \times \ab{C}$  is a skew-free direct product of $\ab{B}$ and
    $\ab{C}$. Then the interval between $\Pol(\ab{A})$ and $\Comp (\ab{A})$
    in the lattice of clones on $A$ is given by
    \begin{multline} \label{eq:CDP}
         \mathbb{I} [\Pol(\ab{A}), \, \Comp (\ab{A})]  \\ = 
    \{ E \otimes F \mid E \in \mathbb{I} [\Pol (\ab{B}), \, \Comp (\ab{B})], \,\,
    F \in \mathbb{I} [\Pol (\ab{C}),\,  \Comp (\ab{C})] \}.
    \end{multline}  
\end{cor}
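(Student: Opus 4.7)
I would prove the two inclusions of~\eqref{eq:CDP} separately. For the ``$\supseteq$''-direction, given $E \in \mathbb{I}[\Pol(\ab{B}),\Comp(\ab{B})]$ and $F \in \mathbb{I}[\Pol(\ab{C}),\Comp(\ab{C})]$, Lemma~\ref{lem:cod} ensures that $E \otimes F$ is a clone, Lemma~\ref{lem:abc} yields $\Pol(\ab{A}) = \Pol(\ab{B}) \otimes \Pol(\ab{C}) \subseteq E \otimes F$, and because skew-freeness means every $\alpha \in \Con(\ab{A})$ has the form $\beta \times \gamma$, a coordinatewise check shows that $e \otimes f$ preserves $\beta \times \gamma$ whenever $e$ preserves $\beta$ and $f$ preserves $\gamma$. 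Hence $E \otimes F \subseteq \Comp(\ab{A})$.

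For the ``$\subseteq$''-direction, let $G \in \mathbb{I}[\Pol(\ab{A}),\Comp(\ab{A})]$. The first step is to show that every $g \in G^{[n]}$ decomposes as $g = e \otimes f$ for some $e \colon B^n \to B$ and $f \colon C^n \to C$. The reason is that the kernels of the two projections $\ab{A} \to \ab{C}$ and $\ab{A} \to \ab{B}$ are the product congruences $1_B \times 0_C$ and $0_B \times 1_C$, hence belong to $\Con(\ab{A})$; since $g \in \Comp(\ab{A})$ preserves them, the $B$- and $C$-coordinates of $g(\vb{x})$ depend only on the $B$- and $C$-coordinates of $\vb{x}$, respectively. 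I then set
\[
E := \{ e \mid \exists f \colon e \otimes f \in G \}, \qquad F := \{ f \mid \exists e \colon e \otimes f \in G \}.
\]
That $E$ and $F$ are clones follows from the composition identity $(e_1 \otimes f_1)(e_2 \otimes f_2, \ldots) = (e_1(e_2, \ldots)) \otimes (f_1(f_2, \ldots))$; the containment $\Pol(\ab{B}) \subseteq E$ is immediate from Lemma~\ref{lem:abc}, and $E \subseteq \Comp(\ab{B})$ follows by testing preservation of any $\beta \in \Con(\ab{B})$ against the product congruence $\beta \times 1_C \in \Con(\ab{A})$, with the symmetric statements for $F$.

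The main obstacle is the last step: lifting these componentwise facts to the equality $G = E \otimes F$. The inclusion $G \subseteq E \otimes F$ is the decomposition above. For the reverse, given $e \in E$ and $f \in F$ of common arity $n$, one only knows a priori that $e \otimes f' \in G$ and $e' \otimes f \in G$ for some auxiliary $f'$ and $e'$, and these need to be \emph{glued} into $e \otimes f$. The trick, identical to the one used in the proof of Lemma~\ref{lem:X}, is that the binary ``swap'' function $((b_1,c_1),(b_2,c_2)) \mapsto (b_1,c_2)$ equals $\pi_1 \otimes \pi_2$ and therefore lies in $\Pol(\ab{B}) \otimes \Pol(\ab{C}) = \Pol(\ab{A}) \subseteq G$ by Lemma~\ref{lem:abc}. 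Composing gives $(\pi_1 \otimes \pi_2)(e \otimes f',\, e' \otimes f) = e \otimes f \in G$, which completes the argument.
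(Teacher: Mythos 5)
Your proof is correct, and both directions reach the right conclusion. The "$\supseteq$" direction is essentially identical to the paper's: both hinge on $\Pol(\ab{A}) = \Pol(\ab{B}) \otimes \Pol(\ab{C})$ (Lemma~\ref{lem:abc}) on one side and $\Comp(\ab{B}) \otimes \Comp(\ab{C}) \subseteq \Comp(\ab{A})$ (from skew-freeness) on the other.

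For the "$\subseteq$" direction you take a slightly different route. The paper packages $G$ into the expansion $\ab{A'} := \algop{A}{G}$, observes that $\ab{A'}$ is still a skew-free direct product $\ab{B'} \times \ab{C'}$ with a Mal'cev term, and then applies Lemma~\ref{lem:abc} a second time to get $G = \Pol(\ab{B'}) \otimes \Pol(\ab{C'})$ in one stroke. You instead unfold the content of that lemma: you show directly that every $g \in G$ splits as $e \otimes f$ (because $g$ preserves the two projection kernels), you define $E$ and $F$ as the sets of components, and you recover the equality $G = E \otimes F$ with the $\pi_1 \otimes \pi_2$ gluing trick from the proof of Lemma~\ref{lem:X}. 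Your version is longer but more self-contained, since it makes explicit both the decomposition of congruence preserving functions and the verification that $E$ and $F$ land in the correct intervals — facts the paper leaves to the reader after invoking Lemma~\ref{lem:abc}. The paper's version is shorter and reuses machinery already established. Both are valid, and the choice is essentially one of exposition.
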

\begin{proof}
  For $\subseteq$, 
  let $G$ be a clone with $\Pol (\ab{A}) \subseteq G \subseteq \Comp (\ab{A})$.
  Then $\ab{A'} := \algop{A}{G}$ has the same congruence lattice as $\ab{A}$
  and is therefore a skew-free direct product of two algebras $\ab{B'}$ and
  $\ab{C'}$. Now we let $E := \Pol (\ab{B'})$ and $F := \Pol (\ab{C'})$
  and use Lemma~\ref{lem:abc} to obtain $G = E \otimes F$.
  
  For $\supseteq$, we first observe that $\ab{A}$ is a skew-free
  direct product. This implies that every function in
  $\Comp (\ab{B}) \otimes \Comp (\ab{C})$ is a congruence
  preserving function on $\ab{A}$.
  Now we choose $E \otimes F$ from the right hand side of~\eqref{eq:CDP}.
  Then clearly $\Pol (\ab{A}) = \Pol (\ab{B}) \otimes \Pol (\ab{C})
  \subseteq E \otimes F \subseteq \Comp (\ab{B}) \otimes \Comp (\ab{C})
  \subseteq \Comp (\ab{A})$, and therefore $E \otimes F$ lies in the
  left hand side of~\eqref{eq:CDP}.
  \end{proof}

We will now investigate the splitting property of lattices that
appears in items~\eqref{it:t3} and~\eqref{it:t4} of Theorem~\ref{thm:1}.
\begin{lem} \label{lem:kl}
  Let $n \in \N$, and let $\ob{L}_1, \ldots, \ob{L}_n$ be bounded lattices,
  and let $\ob{K} := \ob{L}_1 \times \cdots \times \ob{L}_n$.
  Then $\ob{K}$ splits strongly if and only if at least one of the lattices $\ob{L}_i$
  splits strongly.
\end{lem}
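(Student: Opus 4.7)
The plan is to prove each implication separately and use only the definition \eqref{eq:splitsstrongly} together with the coordinatewise order on the direct product.

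For the easier ``if'' direction, I would assume that some $\ob{L}_i$ splits strongly via elements $\delta_i, \varepsilon_i$, and lift these to $\ob{K}$ by padding with the extremes of the other factors: set $\delta := (1, \ldots, 1, \delta_i, 1, \ldots, 1)$ and $\varepsilon := (0, \ldots, 0, \varepsilon_i, 0, \ldots, 0)$. Since $\delta_i < 1$ and $\varepsilon_i > 0$, we have $\delta \ne 1$ and $\varepsilon \ne 0$, and $\varepsilon \le \delta$ holds componentwise. For an arbitrary $\alpha = (\alpha_1, \ldots, \alpha_n) \in \ob{K}$, the comparison $\alpha \le \delta$ (resp.\ $\alpha \ge \varepsilon$) reduces to the single-coordinate condition $\alpha_i \le \delta_i$ (resp.\ $\alpha_i \ge \varepsilon_i$), because all other coordinates of $\delta$ and $\varepsilon$ are the top and bottom of their factors. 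The splitting of $\ob{L}_i$ then gives the required disjunction in $\ob{K}$.

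For the ``only if'' direction, I would start with a strong splitting of $\ob{K}$ via $\delta = (\delta_1, \ldots, \delta_n)$ and $\varepsilon = (\varepsilon_1, \ldots, \varepsilon_n)$ and try to isolate one coordinate on which the splitting is nontrivial. The point I expect to be the main obstacle is showing that there exists a single index $i$ with both $\delta_i < 1$ \emph{and} $\varepsilon_i > 0$; a priori the ``failure of being $1$'' and the ``failure of being $0$'' could happen in disjoint sets of coordinates. I would prove this by contradiction: if every coordinate satisfied $\delta_i = 1$ or $\varepsilon_i = 0$, then the sets $I := \{i : \delta_i < 1\}$ and $J := \{i : \varepsilon_i > 0\}$ would be disjoint and both nonempty (since $\delta \ne 1$ and $\varepsilon \ne 0$). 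The test element $\alpha$ defined by $\alpha_i := 1$ for $i \in I$ and $\alpha_i := 0$ otherwise would then satisfy $\alpha \not\le \delta$ (because of coordinates in $I$) and $\alpha \not\ge \varepsilon$ (because of coordinates in $J$), violating strong splitting.

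Once such an index $i$ is secured, I would verify that $\delta_i$ and $\varepsilon_i$ witness strong splitting of $\ob{L}_i$. The inequalities $0 < \varepsilon_i \le \delta_i < 1$ are immediate. For any $\alpha_i \in \ob{L}_i$, I would form the test vector $\alpha \in \ob{K}$ whose $j$-th coordinate equals $\varepsilon_j$ for $j \ne i$ and equals $\alpha_i$ for $j = i$. By construction, $\alpha \le \delta$ holds in $\ob{K}$ if and only if $\alpha_i \le \delta_i$ (using $\varepsilon_j \le \delta_j$ on the other coordinates), and $\alpha \ge \varepsilon$ holds if and only if $\alpha_i \ge \varepsilon_i$. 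Applying the strong splitting of $\ob{K}$ to $\alpha$ then yields the required disjunction in $\ob{L}_i$, completing the proof.
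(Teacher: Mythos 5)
Your proof is correct and uses the same basic coordinate-wise strategy as the paper; the one place you diverge is in locating the index $i$ in the ``only if'' direction and in the choice of test vectors. The paper fixes any $i$ with $\delta_i < 1$, feeds the vector $(0,\ldots,0,1,0,\ldots,0)$ (with $1$ at place $i$) into the splitting property, and deduces that it must be $\ge \vb{\varepsilon}$, hence $\varepsilon_j = 0$ for all $j \neq i$; since $\vb{\varepsilon} \neq 0$ this forces $\varepsilon_i > 0$. This is a shade more direct than your contradiction argument with the disjoint sets $I$ and $J$, and it also yields the stronger fact that $\vb{\varepsilon}$ is supported entirely in coordinate $i$, which lets the paper verify the splitting of $\ob{L}_i$ using test vectors that vanish outside coordinate $i$. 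Your padding with $\varepsilon_j$ in the other coordinates is a perfectly good substitute and avoids needing that extra observation. Both versions are elementary, short, and correct; the paper's is marginally more economical.
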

\begin{proof}
For the ``if''-direction, assume that $\ob{L}_i$ splits strongly with
witnesses $\delta_i, \varepsilon_i$. Then $\ob{K}$ splits strongly
with $(1,\ldots, 1, \delta_i, 1, \ldots, 1)$  and
$(0,\ldots, 0, \varepsilon_i, 0, \ldots, 0)$ as witnesses ($\delta_i$ and
$\varepsilon_i$ at position $i$).

For the ``only if''-direction, we assume that
$\ob{K}$ splits strongly with witnesses $\vb{\delta} =
(\delta_1, \ldots, \delta_n)$ and
$\vb{\varepsilon} = (\varepsilon_1, \ldots, \varepsilon_n)$.
Since $\vb{\delta} \neq 1$, there is $i$ such that
$\delta_i < 1$. Hence $(0,\ldots, 0, 1, 0, \ldots, 0) \not\le \vb{\delta}$ (with $1$ at place $i$).
By the splitting property, we have $(0,\ldots, 0, 1, 0, \ldots 0) \ge \vb{\varepsilon}$. Thus
for $j \neq i$, we have $\varepsilon_j = 0$, and therefore, since
$\vb{\varepsilon} \neq 0$, we have $\varepsilon_i > 0$.
Now we show that $\ob{L}_i$ splits strongly with witnesses $\delta_i$ and $\varepsilon_i$:
since $\vb{\delta} \ge \vb{\varepsilon}$, we have
$\delta_i \ge \varepsilon_i$.
Now take any $\alpha \in \ob{L}_i$ with $\alpha \not\le \delta_i$.
Then $(0,\ldots, 0, \alpha, 0,\ldots, 0) \not\le \vb{\delta}$,
hence $(0,\ldots, 0, \alpha, 0,\ldots, 0) \ge  \vb{\varepsilon}$, and thus
$\alpha \ge \varepsilon_i$.
\end{proof}

A finite algebra is \emph{congruence uniform} if for every congruence
of $\ab{A}$, all its congruence classes have the same cardinality.
For $\alpha, \beta, \gamma, \delta \in \Con (\ab{A})$, we define
 \begin{equation} \label{eq:dequot}
     \#(\beta:\alpha) = |\ab{A}/\alpha| \, / \, |\ab{A}/\beta|
 \end{equation}
 and write $\Int{\alpha}{\beta} \nearrow \Int{\gamma}{\delta}$ (and also
  $\Int{\gamma}{\delta} \searrow \Int{\alpha}{\beta}$) if
 $\alpha = \beta \meet \gamma$ and $\beta \join \gamma = \delta$.
\begin{lem} \label{lem:uniform1}
  Let $\ab{A}$ be a finite congruence uniform algebra in a congruence
  permutable variety, and let $\alpha, \beta, \gamma, \delta \in \Con (\ab{A})$.
  Then we have:
  \begin{enumerate}
  \item \label{it:u1} If $\alpha \le \beta$, then every
    $\beta$-class is the union of
    $\# (\beta: \alpha)$ distinct $\alpha$-classes; put differently, for every
    $a \in A$
    we have $|\{x / \alpha  \mid x \in a/\beta \}| = \#(\beta:\alpha)$.
  \item \label{it:u2} If $\alpha \le \beta \le \gamma$, then
        $\# (\gamma:\alpha) = \# (\gamma:\beta) \, \cdot \, \#(\beta:\alpha)$.
  \item \label{it:u3}
        If $\Int{\alpha}{\beta} \nearrow \Int{\gamma}{\delta}$, then
        $\# (\delta:\gamma) = \#(\beta:\alpha)$.
  \end{enumerate}
\end{lem}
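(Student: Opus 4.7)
The overall plan is to derive all three parts by counting cardinalities of congruence classes, and, for the critical item~\eqref{it:u3}, to exhibit an explicit bijection that is powered by congruence permutability in the form $\delta = \beta \circ \gamma$.

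For \eqref{it:u1}, I would note that congruence uniformity says every $\alpha$-class has cardinality $|A|/|A/\alpha|$ and every $\beta$-class has cardinality $|A|/|A/\beta|$. Since $\alpha \le \beta$, each $\beta$-class is a disjoint union of $\alpha$-classes; dividing cardinalities gives exactly $|A/\alpha|/|A/\beta| = \#(\beta:\alpha)$ distinct $\alpha$-classes per $\beta$-block. Item~\eqref{it:u2} is then immediate from the definition:
\[
\# (\gamma:\beta) \cdot \#(\beta:\alpha) \;=\; \frac{|A/\beta|}{|A/\gamma|} \cdot \frac{|A/\alpha|}{|A/\beta|} \;=\; \frac{|A/\alpha|}{|A/\gamma|} \;=\; \#(\gamma:\alpha),
\]
with no further input needed.

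The substantive step is \eqref{it:u3}. Fix $a \in A$. By congruence permutability, $\delta = \beta \join \gamma = \beta \circ \gamma$. I would define
\[
\phi\colon \{\, x/\alpha \mid x \in a/\beta \,\} \longrightarrow \{\, y/\gamma \mid y \in a/\delta \,\}, \qquad x/\alpha \mapsto x/\gamma,
\]
which is well defined because $\alpha \le \gamma$. Surjectivity uses permutability: given $y \in a/\delta$, there is $z$ with $a \mathrel{\beta} z \mathrel{\gamma} y$, so $z \in a/\beta$ and $z/\gamma = y/\gamma$. Injectivity uses $\alpha = \beta \meet \gamma$: if $x, x' \in a/\beta$ satisfy $x \mathrel{\gamma} x'$, then $x$ and $x'$ are $\beta$-related as well, hence $x \mathrel{(\beta \meet \gamma)} x'$, i.e.\ $x \mathrel{\alpha} x'$. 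Applying \eqref{it:u1} to both $\alpha \le \beta$ and $\gamma \le \delta$ and using the bijection $\phi$ then yields $\#(\beta:\alpha) = \#(\delta:\gamma)$.

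I do not foresee a genuine obstacle: congruence permutability supplies exactly the relational-product decomposition of $\delta$ that the surjectivity argument requires, and the rest is a routine verification. The only point of care is that \eqref{it:u1} is indeed available for the upper perspective interval $\Int{\gamma}{\delta}$, but this is so because $\gamma \le \delta$ in exactly the same way that $\alpha \le \beta$.
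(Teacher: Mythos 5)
Your proposal is correct and follows essentially the same route as the paper: the counting argument for~\eqref{it:u1}, the definitional computation for~\eqref{it:u2}, and for~\eqref{it:u3} the very same bijection $x/\alpha \mapsto x/\gamma$ between $\{\,x/\alpha \mid x \in a/\beta\,\}$ and $\{\,x/\gamma \mid x \in a/\delta\,\}$, with injectivity from $\alpha = \beta \meet \gamma$ and surjectivity from the permutability decomposition of $\delta = \beta \join \gamma$.
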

\begin{proof}
  \eqref{it:u1} Each $\alpha$-class contains $|\ab{A}|\, / \,|\ab{A}/\alpha|$ elements,
and each $\beta$-class contains $|\ab{A}|\,/\,|\ab{A}/\beta|$ elements.
Since every $\beta$-class is a disjoint union of $\alpha$-classes, we find
that every $\beta$-class must then consist of exactly
$(|\ab{A}|\,/\,|\ab{A}/\beta|)\, / \, (|\ab{A}|\,/\, |\ab{A}/\alpha|)
= |\ab{A}/\alpha| \,/\, |\ab{A}/\beta| = \#(\beta:\alpha)$
different  $\alpha$-classes.
Property \eqref{it:u2} follows directly from~\eqref{eq:dequot}.
For proving \eqref{it:u3}, we first choose an $a \in A$.
By item~\eqref{it:u1}, it is sufficient to show
that $\{ x / \alpha \mid x \in a / \beta \}$
has the same number of elements as
$\{ x / \gamma \mid x \in a / \delta \}$.
To this end, we define $f \colon 
\{x / \alpha  \mid x \in a/\beta \} \to
\{x / \gamma  \mid x \in a/\delta \}$ by
$f (x / \alpha) = x / \gamma$.  The function $f$ is well-defined because
$\alpha \le \gamma$. For injectivity, let $x,y \in a/\beta$ with
$x/\gamma = y/\gamma$. Then $(x,y) \in \beta \meet \gamma = \alpha$, and thus
$x/ \alpha = y/\alpha$. For surjectivity, we let $y \in a/\delta$.
By congruence permutability, we have
$\delta = \beta \join \gamma = \gamma \circ \beta$, and
therefore there exists $z \in A$ with
$(y,z) \in \gamma$ and $(z, a) \in \beta$.
Then $f(z/\alpha) = z/\gamma = y/\gamma$.
Therefore, $f$ is bijective, which establishes~\eqref{it:u3}.
\end{proof}

\begin{lem} \label{lem:uniform2}
  Let $\ab{A}$ be a finite congruence uniform algebra in a congruence permutable
  variety that is
  the direct product of two algebras $\ab{B}$ and $\ab{C}$ of
  coprime order. Then this product is skew-free.
\end{lem}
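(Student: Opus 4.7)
The plan is to fix an arbitrary $\alpha \in \Con(\ab{A})$ and show that it must be a product congruence by reducing its index over its ``product hull'' to a number that divides both $|\ab{B}|$ and $|\ab{C}|$, hence equals $1$.

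First I would set up the notation. Let $\pi_B, \pi_C \in \Con(\ab{A})$ denote the kernels of the projections, so $\pi_B \meet \pi_C = 0_{\ab{A}}$ and, since $\ab{A}$ is congruence permutable and the product is nontrivially coordinatized, $\pi_B \join \pi_C = 1_{\ab{A}}$. Given $\alpha$, define $\alpha_B := \alpha \join \pi_C$ and $\alpha_C := \alpha \join \pi_B$, and let $\alpha^* := \alpha_B \meet \alpha_C$. A short unpacking shows that $\alpha^*$ is precisely the product congruence $\beta \times \gamma$, where $\beta := \alpha_B/\pi_C \in \Con(\ab{B})$ and $\gamma := \alpha_C/\pi_B \in \Con(\ab{C})$; moreover $\alpha \le \alpha^*$, and $\alpha$ is a product congruence if and only if $\alpha = \alpha^*$. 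So the task reduces to showing $\#(\alpha^* : \alpha) = 1$.

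Next I would invoke perspectivity. Since $\alpha \join \pi_C = \alpha_B$ and $\alpha \meet \pi_C = \alpha \meet \pi_C$, we have $\Int{\alpha \meet \pi_C}{\pi_C} \nearrow \Int{\alpha}{\alpha_B}$, so by Lemma~\ref{lem:uniform1}\eqref{it:u3},
\[
\#(\alpha_B : \alpha) = \#(\pi_C : \alpha \meet \pi_C).
\]
Using Lemma~\ref{lem:uniform1}\eqref{it:u2}, $|\ab{A}/(\alpha \meet \pi_C)| = |\ab{A}/\pi_C| \cdot \#(\pi_C : \alpha \meet \pi_C) = |\ab{B}| \cdot \#(\alpha_B:\alpha)$. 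Since $|\ab{A}/(\alpha \meet \pi_C)|$ divides $|\ab{A}| = |\ab{B}| \cdot |\ab{C}|$, this forces $\#(\alpha_B:\alpha)$ to divide $|\ab{C}|$. By symmetry, $\#(\alpha_C:\alpha)$ divides $|\ab{B}|$.

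Finally, Lemma~\ref{lem:uniform1}\eqref{it:u2} applied to $\alpha \le \alpha^* \le \alpha_B$ gives $\#(\alpha^*:\alpha) \mid \#(\alpha_B:\alpha) \mid |\ab{C}|$, and analogously $\#(\alpha^*:\alpha) \mid |\ab{B}|$. Since $\gcd(|\ab{B}|, |\ab{C}|) = 1$, we conclude $\#(\alpha^*:\alpha) = 1$, i.e., $\alpha = \alpha^*$, so $\alpha$ is a product congruence. The only mildly delicate point is the perspectivity step and the translation of the index into a divisor of $|\ab{C}|$; everything else is routine from the tools of Lemma~\ref{lem:uniform1}, and coprimality then delivers the conclusion immediately.
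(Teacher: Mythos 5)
Your proof is correct and follows essentially the same strategy as the paper: reduce to showing $(\alpha\join\kappa_1)\meet(\alpha\join\kappa_2)=\alpha$ for the projection kernels, transfer $\#(\alpha\join\kappa_i : \alpha)$ via a perspectivity to an index that visibly divides one of the factor orders, and finish by coprimality. The only differences are cosmetic: your subscript conventions for $\pi_B,\pi_C$ are reversed relative to what the later equality $|\ab{A}/\pi_C|=|\ab{B}|$ requires (so the labels should be swapped or redefined), and where the paper applies a second perspectivity (via the modular law, to the interval $\Int{(\alpha\meet\beta)\join\gamma}{1_A}$) you instead observe directly that $|\ab{A}/(\alpha\meet\pi_1)|$ divides $|\ab{A}|$ by congruence uniformity, which is a slightly more economical way to reach the same divisibility.
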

\begin{proof}
  Let $\beta$ and $\gamma$ be the projection kernels of $\ab{A}$ such
  that $\ab{A} / \beta \cong \ab{B}$ and $\ab{A} / \gamma \cong \ab{C}$.
  By \cite[Lemma~IV.11.6]{BS:ACIU}, it is sufficient to prove
  that each congruence $\alpha$ of $\ab{A}$ satisfies
  \begin{equation} \label{eq:anonskew}
    (\alpha \join \beta) \meet (\alpha \join \gamma) = \alpha.
  \end{equation}
  We observe that $\Int{\alpha}{\alpha \join \beta} \searrow
  \Int{\alpha \meet \beta}{\beta}$.
  Since every congruence permutable variety is congruence modular,
  we can use the modular law to obtain
  $\alpha \meet \beta = (\alpha \meet \beta) \join 0_A =
  (\alpha \meet \beta) \join (\gamma \meet \beta) =
  ((\alpha \meet \beta) \join \gamma) \meet \beta$.
  Therefore
  $\Int{\alpha \meet \beta}{\beta} \nearrow \Int{(\alpha \meet \beta) \join \gamma}{1_A}$.
  Applying item \eqref{it:u3} of Lemma~\ref{lem:uniform2}, we obtain
  $\# (\alpha \join \beta : \alpha) = \# (1_A: {(\alpha \meet \beta) \join \gamma})$,
  which by item~\eqref{it:u2} of the same lemma divides
  $\# (1_A : \gamma) = |\ab{A} / \gamma|$.
  Hence, using~\eqref{it:u2} again, we have
  $\# ((\alpha \join \beta) \meet (\alpha \join \gamma) : \alpha) \mid |\ab{A}/\gamma|$.
  Changing the roles of $\beta$ and $\gamma$, we obtain
  $\# ((\alpha \join \beta) \meet (\alpha \join \gamma) : \alpha) \mid |\ab{A}/\beta|$.
  Now since $|\ab{A}/\beta|$ and $|\ab{A}/\gamma|$ are coprime,
  we obtain $\# ((\alpha \join \beta) \meet (\alpha \join \gamma) : \alpha) = 1$,
  which implies~\eqref{eq:anonskew}. 
\end{proof}

\section{Proof of the main results} \label{sec:proofs}
    
   \begin{proof}[Proof of Theorem~\ref{thm:1}]
   The items~\eqref{it:t0} and~\eqref{it:t1} are equivalent by definition.

   \eqref{it:t1}$\Rightarrow$\eqref{it:t2}:
  We assume that that the interval $\mathcal{C} = \ob{I} [\Pol (\ab{A}), \Comp (\ab{A})]$
  in the clone lattice is infinite.
  By \cite[Theorem~5.3]{Ai:CMCO}, the set $(\mathcal{C}, \subseteq)$ satisfies the descending
  chain condition, and therefore, there is 
  $C \in \C$ which is minimal such that $\ob{I}[\Pol (\ab{A}),  C]$ is infinite.
  We prove that $C$ is not finitely generated. 
Seeking a contradiction, 
assume that $C$ is finitely generated.
We call a clone $D$ a \emph{subcover} of $C$ if $D \subset C$ and there is no clone
$D'$ with $D \subset D' \subset C$. 
Then by \cite[Chrakterisierungssatz~4.1.3(i)$\Rightarrow$(iii)]{PK:FUR}, $C$
has only finitely many subcovers $D_i$, $i \in I$, and for each clone $E$ on $A$
with $E \subset C$ there is $i \in I$ with $E \subseteq D_i$.
Let
$J := \{j \in I \mid \Pol (\ab{A}) \subseteq D_j \}$. Then 
$\ob{I} [\Pol (\ab{A}), C] = \{C\} \cup \, \bigcup_{j \in J} \ob{I} [\Pol (\ab{A}), D_j]$.
Hence one interval $\ob{I} [\Pol (\ab{A}), D_j]$ must be infinite,
contradicting the minimality of~$C$.

    \eqref{it:t2}$\Rightarrow$\eqref{it:t1}:
    Let $m$ be the maximal arity of the fundamental operations on $\ab{A}$,
    and let $C$ be a nonfinitely generated clone with $\Pol (\ab{A}) \subseteq C
    \subseteq \Comp (\ab{A})$. For $n \ge m$, let
    Let $C_n$ be the subclone of $C$ generated by its $n$-ary members.
    Then $C_m \subseteq C_{m+1} \subseteq \cdots$ and 
    $\bigcup_{n \ge m} C_n = C$. Since $C$ is not finitely generated,
    we have $C_n \subset C$ for all $n \ge m$, and therefore
    the set $\{C_n \mid n \in \N, n \ge m\}$ is infinite.

    Before proving the equivalence with \eqref{it:t3} and \eqref{it:t4},
    we additionally assume that $\ab{A}$ is isomorphic to $\ab{A}_1 \times \cdots \times \ab{A}_n$, and we also assume
     that
      for each $i \in \{1,\ldots, n\}$,
       $\ab{A}_i$ is nilpotent and  
      $|\ab{A}_i|$ is a  prime power, and that
      for all $i,j \in \{1,\ldots, n\}$ with
      $i \neq j$, we have  $\gcd (|\ab{A}_i|, |\ab{A}_j|) = 1$.
    Since $\ab{A}$ has few subpowers, 
    $\ab{A}$ generates a congruence modular variety
    \cite[Theorem~4.2]{BI:VWFS}.
    Thus all the algebras
    $\ab{A}_i$ are nilpotent algebras in a congruence modular variety.
    Representing the congruence $1_A$ of $\ab{A}$ as the join
    of the projection kernels and
    using the join distributivity of the binary commutator
    \cite[Proposition~4.3]{FM:CTFC} to compute
    the lower central series of $\ab{A}$, we see that
    then
    $\ab{A}$ is nilpotent, too.
    Hence by Theorem~6.2 of \cite{FM:CTFC},
    $\ab{A}$ has a Mal'cev term, which we will denote by~$d$, and therefore
    $\ab{A}$ generates a congruence permutable variety \cite{Ma:OTGT}.
    By \cite[Corollary~7.5]{FM:CTFC} $\ab{A}$ and its homomorphic images $\ab{A}_1, \ldots, \ab{A}_n$
    are all congruence uniform.
    Now Lemma~\ref{lem:uniform2} implies 
    that for every $i \in \{1,\ldots, n\}$,
    $\ab{A}$ is a skew-free product $\ab{A}_i \times \ab{C}$ with
    $\ab{C} := \prod_{j \in \{1,\ldots, n\} \setminus \{i\}}  \ab{A}_j$.

\eqref{it:t2}$\Rightarrow$\eqref{it:t4}:
We proceed by contraposition. We assume
that the congruence lattice $\Con (\ab{A})$ does not split strongly
and show that every clone in $\Int{\Pol (\ab{A})}{\Comp (\ab{A})}$ is finitely generated.
We will need another notion of splitting: we say that
a lattice \emph{splits} if it is the union of two proper subintervals;
this definition differs from ``\emph{splits strongly}'' in that
``\emph{splitting}'' does not claim that the  subintervals intersect \cite[p.~861]{AM:SOCO}. Hence
$\ob{L}$ \emph{splits} if
\begin{equation} \label{eq:splits}
   \ob{L} \models \exists \, \delta, \varepsilon \in \ob{L} :
\big(
0 < \varepsilon \text{ and } \delta < 1 \text{ and }
(\forall \alpha \in \ob{L} : \alpha \le \delta \text{ or }
\alpha \ge \varepsilon) \big).
\end{equation}
Let $C$ be a clone with $\Pol (\ab{A}) \subseteq C \subseteq \Comp (\ab{A})$,
and let $\ab{A'} := \algop{A}{C}$ be the corresponding congruence
preserving expansion of $\ab{A}$.
Since the lattice $\Con (\ab{A})$ does not split strongly,
\cite[Corollary~3.4(2)]{AM:SOCO} yields that $\ab{A'}$ is isomorphic
to a direct product
$\ab{B'} \times \ab{C}'_1 \times \ldots  \times \ab{C}'_n$ such that
$\Con (\ab{B'})$ does not split, $n \in \N_0$, and each $\ab{C}'_i$ is simple.
We will now show that each of these direct factors has a finitely
generated clone of polynomial functions. Let us first
examine $\ab{B'}$. The congruence lattice of $\ab{B'}$ does not split, hence
by~Propositions~3.7 and 3.8 of \cite{ALM:FGOC}, $\Pol (\ab{B'})$
is finitely generated. Examining the factors $\ab{C}'_i$, we let
$i \in \{1, \ldots, n\}$ and observe that $\ab{C}'_i$ is a simple finite algebra with Mal'cev term.
If  $\ab{C}'_i$ is abelian, 
$\ab{C}'_i$ is polynomially equivalent to
a module over a ring. Hence its clone of polynomial functions is
generated by its binary members.
If $\ab{C}'_i$ is nonabelian,  then 
$\Pol(\ab{C}'_i)$ consists of
all finitary operations on $\ab{C}'_i$
(this follows, e.~g., from \cite[Corollary~3.5]{HH:ALEC})
and is therefore generated by
its binary members by \cite[p.\ 180]{Po:ITAG} (cf. \cite{Si:SLFD}).
Since by \cite[Corollary~3.4]{AM:SOCO}, the direct product
$\ab{B'} \times \ab{C}'_1 \times \ldots  \times \ab{C}'_n$ has
no skew congruences, we may use Lemma~\ref{lem:abc} $n$ times to obtain
$\Pol (\ab{B'} \times \ab{C}'_1 \times \ldots  \times \ab{C}'_n) = 
 \Pol (\ab{B'}) \otimes \Pol (\ab{C}'_1) \otimes \ldots  \otimes \Pol (\ab{C}'_n)$.
 Now Lemma~\ref{lem:X} implies that $\Pol (\ab{B'} \times \ab{C}'_1 \times \ldots  \times \ab{C}'_n)$
 is finitely generated, and thus also $\Pol (\ab{A'})$ is finitely generated.
 Since $\Pol (\ab{A'}) = C$, $C$ is finitely generated.

 \eqref{it:t4}$\Rightarrow$\eqref{it:t3}:
 From Lemma~\ref{lem:uniform2}, we obtain that for each
 $i \in \{1,\ldots, n-1\}$, the direct product
 $\ab{B} \times \ab{C}$ with $\ab{B} := \ab{A}_i$ and
 $\ab{C} := \prod_{j=i+1}^n \ab{A}_i$ is skew-free.
 Hence we obtain that 
 $\Con (\ab{A})$ is isomorphic to the lattice
 $\prod_{i=1}^n \Con (\ab{A}_i)$. Now
 Lemma~\ref{lem:kl} yields that there is $i \in \{1,\ldots, n\}$
 such that $\Con (\ab{A}_i)$ splits strongly.

 \eqref{it:t3}$\Rightarrow$\eqref{it:t2}:
 Let $i \in \{1,\ldots, n\}$ be such that
 $\Con (\ab{A}_i)$ splits strongly.
 The first part of the proof will produce a nonfinitely generated clone
 $D$
 between $\Pol (\ab{A}_i)$ and $\Comp (\ab{A}_i)$. From $D$,
 it will then be easy to produce a nonfinitely generated clone
 between $\Pol (\ab{A})$ and $\Comp (\ab{A})$.

 In order to produce such a clone $D$, we
 let $\ab{B} := \ab{A}_i$, and $\ab{C} := \prod_{j \neq i} \ab{A}_i$.
 Let $\delta, \varepsilon \in \Con (\ab{B})$ be two congruences witnessing that
 $\Con (\ab{B})$ splits strongly as in~\eqref{eq:splitsstrongly};
 we may choose them in such a way
that $\varepsilon$ is an atom of $\Con (\ab{B})$.
Let $(a, b) \in \varepsilon$ with $a \neq b$,
and for every $n \in \N$, let $f_n  \colon  B^n \to B$ be defined
by
\begin{equation*}
  \begin{array}{rcl}
    f_n (\vb{x}) &=& b \text{ if } \vb{x} \in (B \setminus (a/\delta))^n,
    \text{ and} \\
    f_n (\vb{x}) &=& a \text{ if there exists an } i \in \{1,\ldots, n\}
    \text{ such that } x_i \in a/\delta.
  \end{array}
\end{equation*}
The function $f_n$ is congruence preserving; to this end,
let $\vb{x}, \vb{y} \in B^n$ and let $\alpha$ be a congruence of $\ab{B}$
such that for all $i$, $(x_i, y_i) \in \alpha$.
If $\alpha \le \delta$, then $f_n (\vb{x}) = f_n (\vb{y})$, and therefore
$(f_n (\vb{x}), f_n (\vb{y})) \in \alpha$. If $\alpha \not\le \delta$,
then by the splitting property, $\alpha \ge \varepsilon$. Since
$(f_n (\vb{x}), f_n (\vb{y})) \in \{ (a,a), (a,b), (b,a), (b,b) \} \subseteq \varepsilon$,
we obtain $(f_n (\vb{x}), f_n (\vb{y})) \in \alpha$. Hence $f_n$ is indeed
congruence preserving.
Now we define $D$. To this end, let $\ab{B'}$ be the expansion of $\ab{B}$ with the operations $\{ f_i \mid i \in \N \}$, and
let $D := \Pol (\ab{B'})$.

Our goal is to show that $D$ is not finitely generated. To this end, we first
show that
\begin{equation} \label{eq:Bprimenilpotent}
  \ab{B'} \text{ is nilpotent.}
\end{equation}
For this purpose, we show that $\ab{B'}/\varepsilon$ is nilpotent, and that
$\varepsilon$ is central in $\ab{B'}$.

For the first claim, we observe that $\ab{B'}/\varepsilon$ is an expansion of
$\ab{B} / \varepsilon$ with constant operations because
all $f_i$ have their range contained in one single $\varepsilon$-class and are
therefore constant modulo $\varepsilon$. This implies
$\Pol (\ab{B'} / \varepsilon) = \Pol ( \ab{B} / \varepsilon )$.
Since $\ab{B}/\varepsilon$
is nilpotent, then so is $\ab{B'}/\varepsilon$.

For proving the centrality of $\varepsilon$, we use the relational description
of centrality given in \cite[Proposition~2.3 and Lemma~2.4]{AM:PCOG}, which goes
back to Theorem~3.2(iii) of \cite{Ki:TROT}. From these results, we see that
$\varepsilon$ is central in $\ab{B'}$ 
if and only if
all fundamental operations of $\ab{B'}$ preserve the relation
\[
\rho := \{ (x_1,x_2,x_3,x_4) \in B^4 \mid (x_1, x_2) \in \varepsilon, \,
           d(x_1, x_2, x_3) = x_4 \},
           \]
           where $d$ is the Mal'cev term of $\ab{B}$ that we produced before proving the implication
           \eqref{it:t2}$\Rightarrow$\eqref{it:t4}.           
We will first show that all $f_n$ preserve $\rho$.
To this end, let $n \in \N$ and let
$\langle (x_1^{(i)}, x_2^{(i)}, x_3^{(i)}, x_4^{(i)})  \mid i \in \{1,\ldots, n\} \rangle
\in \rho^n$, and 
for $i \in \{1, \ldots, 4 \}$, set
$y_i := f (x_i^{(1)}, \dots, x_i^{(n)})$.
We have to show
$(y_1, y_2, y_3, y_4) \in \rho$.
Since $f$ is congruence preserving, we have $(y_1, y_2) \in \varepsilon$. The second
property that we have to show is $d(y_1,y_2,y_3) = y_4$.
We first observe that for all $i \in \{1,\ldots, n\}$, we have $(x_1^{(i)}, x_2^{(i)}) \in \varepsilon$
and therefore, since $\varepsilon \le \delta$, also $(x_1^{(i)}, x_2^{(i)}) \in \delta$.
Thus $f_n (x_1^{(1)}, \ldots, x_1^{(n)}) = f_n (x_2^{(1)}, \ldots, x_2^{(n)})$.
Hence $y_1 = y_2$ and therefore 
$$d (y_1,
     y_2,
     y_3) = y_3.$$
    Since for each $i$,
    $x_3^{(i)} = d (x_2^{(i)}, x_2^{(i)}, x_3^{(i)}) \equiv_{\delta} 
    d (x_1^{(i)}, x_2^{(i)}, x_3^{(i)})$,
    and since $f_n$ is constant on $\delta$-classes, we have
    $y_3 =  f_n (x_3^{(1)}, \ldots, x_3^{(n)}) =
    f_n ( \langle  d (x_1^{(i)}, x_2^{(i)}, x_3^{(i)}) \mid i \in \{1,\ldots, n\} \rangle)
    =
    f_n (x_4^{(1)}, \ldots, x_4^{(n)}) = y_4$.
    Therefore, $f_n$ preserves $\rho$.
    In $\ab{B}$, the commutator $[\varepsilon, 1]$ is  $0_B$ because $\ab{B}$ is nilpotent
    and $\varepsilon$ is a minimal congruence of $\ab{B}$.
    Therefore, the relational
    description of centrality implies that every fundamental operation of $\ab{B}$
    preserves $\rho$.
    Hence every fundamental operation of $\ab{B'}$ preserves $\rho$; this implies
    that $\varepsilon$ is central in $\ab{B'}$.
    Since $\ab{B'} / \varepsilon$ nilpotent and $\varepsilon$ is central,
    $\ab{B'}$ is nilpotent, which concludes the proof of~\eqref{eq:Bprimenilpotent}.
    
    Now suppose that $D$ is finitely generated by some finite subset $X$
    of $D$. Then the algebra
    $\ab{B''} := \algop{B}{X}$ satisfies $\Pol (\ab{B''}) = D = \Pol (\ab{B'})$.
    Therefore, $\ab{B''}$ is nilpotent, of finite type and of prime power order.
    Hence, using \cite[Theorem~3.14(3)$\Rightarrow$(4)]{Ke:CMVW}, we obtain that there is a $k \in \N$ such that
    every commutator term of $\ab{B''}$ is of rank at most $k$.
    However,
    \begin{multline*}
        w(x_1,\ldots, x_{k+2}) := \\
     d\Big(
    f_{k+1}  \big( d(x_1, x_{k+2}, a), d(x_2, x_{k+2}, a), \dots
    d(x_{k+1}, x_{k+2}, a) \big),
    a, x_{k+2}
    \Big)
    \end{multline*}
    lies in $\Pol (\ab{B'}) = D = \Clo_B (X) = \Clo (\ab{B''})$.
    Since
    $w (z, x_2, \ldots, x_k, x_{k+1}, z) = \dots
     = w(x_1, x_2, \ldots, x_k ,     z, z) = z$  for all
    $(\vb{x}, z) \in B^{k+2}$, $w$ 
    is a commutator term of $\ab{B''}$. Let $c \not\in a/\delta$. Then
    $w(c,\ldots, c, a) = d(b,a,a) = b$, and therefore $w$ is not the projection
    to the last component, Hence $w$ is a  nontrivial commutator term of rank $k+1$
    in the sense of
    \cite{Ke:CMVW}.
    This contradiction proves that $D$ is not finitely generated.

    From this clone $D$ on $A_i$, we will now produce a clone $E$ on $A$.
    In order to do this,
    we let $E$ be the clone $D \otimes \Pol (\ab{C})$ on $A$.
    Since $\ab{A}$ is a skew-free product,
    Lemma~\ref{lem:abc} implies $\Pol (\ab{A}) = \Pol (\ab{B}) \otimes \Pol (\ab{C})
    \subseteq D \otimes \Pol (\ab{C}) \subseteq \Comp (\ab{B}) \otimes \Comp (\ab{C})$.
    Since $\ab{A}$ is skew-free,
    $\Comp (\ab{B}) \otimes \Comp (\ab{C}) = \Comp (\ab{B} \times \ab{C})  = \Comp (\ab{A})$.
    Hence $E$ is a clone in the interval $\Int{\Pol (\ab{A})}{\Comp (\ab{A})}$.
    Since $D$ is not finitely generated, Corollary~\ref{cor:fg} implies
    that $E$ is not finitely generated.
\end{proof}

\begin{proof}[Proof of Corollary~\ref{cor:pgroup}]
  Since a finite nilpotent group is the direct product
  of its Sylow-subgroups, the result is an instance of the
  equivalence \eqref{it:t0}$\Leftrightarrow$\eqref{it:t4} from
  Theorem~\ref{thm:1}.
\end{proof}

\begin{proof}[Proof of Corollary~\ref{cor:abgroup}]
  From Theorem~1 of \cite{BC:AGWS}, we know that
  the subgroup lattice
  splits (as defined in \eqref{eq:splits})
  iff $r=1$ or ($r \ge 2$ and $m_1 > m_2$).
  
  For the ``if''-direction, let us assume that
  ($r \ge 2$ and $m_1 = m_2$) or ($r=1$ and $m_1 = 1$).
  In the case that $r \ge 2$ and $m_1 = m_2$, the description above tells that
  $\ob{S}$ does not split strongly.
    In the case $r = 1$ and $m_1 =  1$, $\ob{S}$ is a two element chain,
    which does not split strongly, either.
   Hence the implication \eqref{it:t0}$\Rightarrow$\eqref{it:t4} of Theorem~\ref{thm:1}
   yields the result.

   For the ``only if''-direction, we assume that
   $G$ has finitely many polynomially inequivalent expansions.
   We use Theorem~\ref{thm:1} and obtain that $\ob{S}$ does
   not split strongly.
   In the case $r=1$ we obtain that $\ob{S}$ is a chain with
   $m_1 + 1$ elements. Since $\ob{S}$ does not split strongly,
   we then must have $m_1 = 1$.
   We now consider the case $r \ge 2$.
   Seeking a contradiction, we assume $m_1 > m_2$.
   By~\cite{BC:AGWS}, $\ob{S}$ then splits. Lemma~2.1 from \cite{AM:SOCO} describes
   lattices that do split, but not strongly. This lemma yields that
   $\ob{S}$ is isomorphic to a direct product $\ob{M} \times \ob{L}$
   of two lattices such that $\ob{M}$ does not split,
   and $\ob{L}$ is a Boolean lattice.
   Since $\ob{S}$ splits and $\ob{M}$ does not split, we have $|\ob{L}| > 1$.
   Also $|\ob{M}| > 1$:  if $\ob{M}$ has one element, then $\ob{S}$ is Boolean.
   But since $r \ge 2$, $G$ has a subgroup
   isomorphic to $\Z_p \times \Z_p$, and the subgroups of $\Z_p \times \Z_p$ 
   form a nondistributive lattice, contradicting that $\ob{S}$ is Boolean.
   From the lattice isomorphism $\gamma  \colon  \ob{S} \to \ob{M} \times \ob{L}$
   we obtain that
   $G$ is isomorphic to the direct product $H \times K$ of its two non-trivial groups
   $H = \gamma^{-1} (1_{\ob{M}}, 0_{\ob{L}})$ and
   $K = \gamma^{-1} (0_{\ob{M}}, 1_{\ob{L}})$ and that $H \times K$ is a skew-free product
   of $H$ and $K$, meaning that for every subgroup $I$ of $H \times K$, we have
   \begin{equation} \label{eq:is}
     I = (I \cap (H \times \{0\})) + (I \cap (\{0\} \times K)).
   \end{equation}  
   Taking minimal subgroups $H_1$ of $H$ and $K_1$ of $K$,
   we see that $H_1 \times K_1$ is isomorphic to $\Z_p \times \Z_p$,
   and therefore $H_1 \times K_1$ contains $p-1$ skew subgroups $I$ of $H \times K$
   that do not satisfy~\eqref{eq:is}.   
   This contradicts the fact that $H \times K$ is a skew-free product of $H$ and $K$.
   Hence the assumption $m_1 > m_2$ leads to a contradiction, proving that $m_1 = m_2$.
\end{proof}

\section*{Acknowledgments}
The authors thank C.\ Pech and N.\ Mudrinski for discussions on the topics of
this paper. 
\bibliographystyle{alpha}
\newcommand{\etalchar}[1]{$^{#1}$}
\def\cprime{$'$}

\end{document}